\newtheorem{thm}{Theorem}[section]
\newtheorem{lem}[thm]{Lemma}
\newtheorem{prop}[thm]{Proposition}
\newtheorem{cor}[thm]{Corollary}
\theoremstyle{definition}
\theoremstyle{remark}
\newtheorem*{rem}{Remark}%[section]
\newcommand{\eps}{\varepsilon}
\numberwithin{equation}{section}
\begin{document}

\title[Geometry of Banach spaces with an octahedral norm]{Geometry of Banach spaces with \\an octahedral norm}

%    author one information
\author{Rainis Haller and Johann Langemets}
\address{Institute of Mathematics, University of Tartu, J.~Liivi 2, 50409 Tartu, Estonia}
\curraddr{}
\email{rainis.haller@ut.ee}
\email{johann.langemets@ut.ee}
\thanks{This research was supported by Estonian Targeted Financing Project SF0180039s08 and by Estonian Science Foundation Grant 8976.}

\subjclass[2010]{Primary 46B20, 46B22}
%    For articles to be published after 1 January 2010, you may use
%    the following version:
%\subjclass[2010]{Primary }

\keywords{Octahedral norm, thickness}

\date{}

\dedicatory{}

\begin{abstract}
We discuss the geometry of Banach spaces whose norm is octahedral or, more generally, locally or weakly octahedral. Our main results characterize these spaces in terms of covering of the unit ball.
\end{abstract}

\maketitle
\section{Introduction}\label{sec: 1. Introduction}
Let $X$ be a Banach space over the field of real numbers $\mathbb R$. A norm on $X$ is called \emph{octahedral} (e.g. \cite{G, DGZ, BGLPRZoca, HLP}) if, for every finite-dimensional subspace $E$ of $X$ and $\eps>0$ there exists an element $y$ of $X$ with $\|y\|=1$ and
\begin{equation*}\label{eq: oct_def}
\|x-y\|\geq(1-\varepsilon)(\|x\|+\|y\|)\quad\text{for all}\ x\in E.
\end{equation*}
Note that only infinite-dimensional Banach spaces may possess an octahedral norm.

In addition to the octahedral norms, two wider classes called locally octahedral norms and weakly octahedral norms (see definitions below in Section~\ref{sec: 3}) were introduced and studied in a very recent preprint \cite{HLP} (cf. \cite{BGLPRZoca}), whereas these three octahedrality conditions came up in relation to the diameter two properties \cite{ALN}.

In this paper, we shall study the geometry of those Banach spaces whose norm is octahedral or, more generally, locally or weakly octahedral. Our main results characterize these spaces in terms of covering of the unit ball.

The starting point of our investigation is the following observation made by G.~Godefroy in \cite{G}:
\medskip

\emph{The norm on $X$ is octahedral if and only if $B_X$, the closed unit ball of $X$, has the property that in every finite covering of $B_X$ with closed balls at least one member of the covering itself contains $B_X$.}

\medskip\noindent We shall recall the result in Proposition~\ref{prop: oct balls} below and provide its proof for completeness.
In the special case when $X$ is separable, the preceding equivalence is contained in \cite[Lemma~9.1]{MR983386}.

We consider only infinite-dimensional real Banach spaces. Let us fix some notation. By $B(x,r)$ we denote the closed ball in $X$ with center at $x\in X$ and radius $r>0$. The closed unit ball of a Banach space $X$ is denoted by $B_X$ and its unit sphere by $S_X$. The dual space of $X$ is denoted by $X^\ast$.

\section{Octahedral spaces}\label{sec: 2}
We recall a simple criterion for the octahedral norm.

\begin{lem}[{\cite[Proposition 2.4]{HLP}}]\label{lem: oct}
Let $X$ be a Banach space. The following assertions are equivalent:
\begin{enumerate}[\upshape (i)]
\item the norm on $X$ is octahedral;
\item for every $n\in\mathbb N$, $x_1,\dotsc,x_n\in S_X$, and $\varepsilon>0$ there is a $y\in S_X$ such that
\[
\|x_i-y\|\geq 2-\varepsilon\quad\text{for all $i\in\{1,\dotsc,n\}$}.
\]
\end{enumerate}
\end{lem}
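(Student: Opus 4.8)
The plan is to establish the two implications separately. The implication (i)$\Rightarrow$(ii) is a direct specialization: given $x_1,\dots,x_n\in S_X$ and $\eps>0$, I would apply octahedrality to the finite-dimensional subspace $E=\mathrm{span}\{x_1,\dots,x_n\}$ with tolerance $\eps/2$. This produces $y\in S_X$ satisfying $\|x-y\|\ge(1-\eps/2)(\|x\|+\|y\|)$ for every $x\in E$; evaluating at $x=x_i$, which has norm $1$, yields $\|x_i-y\|\ge(1-\eps/2)\cdot 2=2-\eps$, exactly as required.

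The substantive direction is (ii)$\Rightarrow$(i). Fix a finite-dimensional subspace $E$ and $\eps>0$, and assume without loss of generality that $\eps<1$. Since $E$ is finite-dimensional, its unit sphere $S_E$ is compact, so I would choose a finite $\delta$-net $\{x_1,\dots,x_n\}\subset S_E$ (with $\delta$ to be fixed at the end). Applying hypothesis (ii) to these finitely many points gives a single $y\in S_X$ with $\|x_i-y\|\ge 2-\eps'$ for all $i$. For an arbitrary $x\in E\setminus\{0\}$, setting $z=x/\|x\|\in S_E$ and selecting $x_i$ within $\delta$ of $z$, the triangle inequality gives $\|z-y\|\ge\|x_i-y\|-\|x_i-z\|\ge 2-\eps'-\delta$.

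The crucial step is to upgrade this estimate on the sphere, valid only for the unit vector $z$, to the nonhomogeneous octahedrality inequality, which must hold for the scaling $x=\|x\|z$ with the factor $\|x\|+1$. Here I would take a norming functional $f\in X^\ast$, $\|f\|=1$, for the nonzero vector $z-y$, so that $f(z)-f(y)=\|z-y\|\ge 2-\eps'-\delta$. Since $f(z)\le 1$ and $-f(y)\le 1$, this forces both $f(z)\ge 1-(\eps'+\delta)$ and $f(y)\le-1+(\eps'+\delta)$. Consequently, using $f(x)=\|x\|f(z)$,
\[
\|x-y\|\ge f(x)-f(y)=\|x\|f(z)-f(y)\ge\bigl(1-(\eps'+\delta)\bigr)\bigl(\|x\|+1\bigr),
\]
and choosing $\eps'$ and $\delta$ with $\eps'+\delta\le\eps$ completes the argument.

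The main obstacle is precisely this bridge. Condition (ii) controls only finitely many unit vectors and only guarantees a diameter-two separation $\ge 2-\eps$, whereas (i) demands the affine estimate $(1-\eps)(\|x\|+\|y\|)$ across the whole continuum $E$. Compactness of $S_E$ disposes of the passage from a continuum to a finite net, but the genuinely delicate ingredient is extracting, from each near-antipodal pair $(z,y)$, a single functional $f$ that nearly norms $z$ and nearly norms $-y$ at the same time; linearity of $f$ then delivers the required lower bound simultaneously for all positive scalings of $z$.
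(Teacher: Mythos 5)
Your proof is correct: both implications are sound, and the key step in (ii)$\Rightarrow$(i) --- combining a finite $\delta$-net of the compact sphere $S_E$ with a Hahn--Banach functional norming $z-y$, which is then forced to nearly norm $z$ and $-y$ simultaneously, so that linearity yields the inequality for all scalings $\|x\|z$ --- is exactly the standard argument. Note that the paper itself states this lemma without proof, citing \cite[Proposition~2.4]{HLP}; your argument is essentially the one used there, so there is nothing to flag.
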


The following characterization of octahedral norms is given by P.~L. Papini in \cite{MR1735844}. We shall give its proof for completeness. Our proof is not essentially different from the one in \cite{MR1735844}, but the benefit of Lemma~\ref{lem: oct} is clear.

\begin{prop}[{\cite[Theorem 2.1]{MR1735844}}]
Let $X$ be a Banach space. The following assertions are equivalent:
\begin{enumerate}[\upshape (i)]
\item the norm on $X$ is octahedral;
\item $\mu_2(X)=2$, where
\[
\mu_2(X)=\inf_{\substack{x_1,\dotsc,x_n\in S_X\\ n\in\mathbb N}}\sup_{x\in S_X} \frac 1n\sum_{i=1}^n\|x_i-x\|.
\]
\end{enumerate}
\end{prop}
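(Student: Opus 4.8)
The plan is to derive both implications directly from the octahedrality criterion in Lemma~\ref{lem: oct}, after recording the elementary fact that $\mu_2(X)\le 2$ always holds: every summand satisfies $\|x_i-x\|\le\|x_i\|+\|x\|=2$, so each inner supremum is at most $2$ and hence the infimum defining $\mu_2(X)$ is at most $2$. Consequently both directions reduce to comparing the average distance with this universal upper bound, and the bridge between the averaged quantity $\mu_2$ and the pointwise distances appearing in Lemma~\ref{lem: oct} will be a one-line averaging estimate.

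For the implication (i)$\Rightarrow$(ii) I would fix an arbitrary finite family $x_1,\dots,x_n\in S_X$ and $\varepsilon>0$. Since the norm is octahedral, Lemma~\ref{lem: oct} supplies a $y\in S_X$ with $\|x_i-y\|\ge 2-\varepsilon$ for every $i$, whence the average $\frac1n\sum_{i=1}^n\|x_i-y\|$ is at least $2-\varepsilon$. Letting $\varepsilon\to 0$ shows that the inner supremum over $x\in S_X$ equals $2$ for this family, and since the family was arbitrary, every term in the infimum defining $\mu_2(X)$ equals $2$; combined with the trivial upper bound this gives $\mu_2(X)=2$.

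For the converse (ii)$\Rightarrow$(i) the key observation is that $\mu_2(X)=2$, together with the bound $\sup_{x}\frac1n\sum_i\|x_i-x\|\le 2$, forces the inner supremum to equal $2$ for \emph{every} finite family $x_1,\dots,x_n\in S_X$. Given such a family and $\varepsilon>0$, I would apply this with $\delta=\varepsilon/n$: there is a $y\in S_X$ with $\sum_{i=1}^n\|x_i-y\|>n(2-\delta)=2n-\varepsilon$. The decisive step is to convert this statement about the average into one about each individual distance: fixing an index $i$ and bounding the remaining $n-1$ terms by $2$ each gives
\[
\|x_i-y\|>2n-\varepsilon-2(n-1)=2-\varepsilon,
\]
and this holds simultaneously for all $i$, so Lemma~\ref{lem: oct} yields octahedrality.

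The proof is short precisely because Lemma~\ref{lem: oct} already carries the hard geometric content; the only genuine point, and the step I expect to require the most care, is the scaling $\delta=\varepsilon/n$ in the converse, which is exactly what makes the per-coordinate loss $n\delta$ collapse to the prescribed $\varepsilon$. I would also double-check the boundary interpretation of the infimum — that $\mu_2(X)=2$ really forces each inner supremum to equal $2$, rather than merely to be bounded below along some subfamilies — since that is where one could slip; everything else is the triangle inequality.
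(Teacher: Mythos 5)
Your proof is correct and follows essentially the same route as the paper: both directions reduce to Lemma~\ref{lem: oct}, with the converse using exactly the same scaling (choosing the tolerance $\varepsilon/n$ in the supremum so that bounding the other $n-1$ summands by $2$ leaves $\|x_i-y\|\geq 2-\varepsilon$ for every $i$). The points you flag for care --- that $\mu_2(X)=2$ together with the universal bound $\frac1n\sum_i\|x_i-x\|\leq 2$ forces each inner supremum to equal $2$ --- are handled correctly and match the paper's (more tersely stated) argument.
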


\begin{proof}
(i) $\Rightarrow$ (ii). Assume that the norm on $X$ is octahedral. By Lemma~\ref{lem: oct} (ii), we have $\sup_{x\in S_X}\sum_{i=1}^n\|x_i-x\|=2n$ for every $x_1,\dotsc,x_n$ in $S_X$. Thus $\mu_2(X)=2$.

(ii) $\Rightarrow$ (i). Assume that $\mu_2(X)=2$. Let $x_1,\dotsc,x_n\in S_X$ and let $\varepsilon>0$.
Since $\mu_2(X)=2$, we have
\[
\sup_{x\in S_X}\frac 1n\sum_{i=1}^n\|x_i-x\|=2.
\]
It follows that there is an element $x$ in $S_X$ with \[\frac 1n\sum_{i=1}^n\|x_i-x\|\geq 2-\frac \varepsilon n.\] This yields
\[
\|x_i-x\|\geq 2-\eps\quad\text{for all $i\in\{1,\dotsc,n\}$}.
\]
By Lemma~\ref{lem: oct} (ii), the norm on $X$ is octahedral.
\end{proof}

For a Banach space $X$, R.~Whitley \cite{MR0228997} introduced the \emph{thickness} parameter $T(X)$, defined by
\[
T(X)=\inf\{\eps>0\colon \text{there exists a finite $\eps$-net for $S_X$ in $S_X$}\}.
\]
For finite-dimensional Banach spaces $X$, one has $T(X)=0$. If $X$ is an infinite-dimensional Banach space then Whitley showed that $1\leq T(X)\leq 2$.

\begin{rem}
In an infinite-dimensional Banach space $X$ a finite covering of $S_X$ with closed balls covers the entire unit ball $B_X$ (for the proof see e.g. \cite{MR1541729}, \cite[Proposition~3ter]{MR2583590}, \cite[Proposition~2]{MR2984117}). Therefore, for an infinite-dimensional $X$,
\[
T(X)=\inf\{\eps>0\colon \exists\{x_1,\dotsc,x_n\}\subset S_X \text{ s.t. } B_X\subset\bigcup_{i=1}^n B(x_i,\eps)\}.
\]
\end{rem}

We shall soon see that the octahedrality of the norm on $X$ is also characterized by the condition $T(X)=2$. In fact, this is a direct consequence of the following observation.

\begin{prop}[cf. {\cite[p.~12]{G}}]\label{prop: oct balls}
Let $X$ be a Banach space. The following assertions are equivalent:
\begin{enumerate}[\upshape (i)]
\item the norm on $X$ is octahedral;
\item if $x_1,\dotsc,x_n\in X$ and $r_1,\dotsc,r_n>0$ are such that
\[
S_X\subset \bigcup_{i=1}^nB(x_i,r_i)
\]
then $S_X\subset B(x_i,r_i)$ for some $i$ in $\{1,\dotsc,n\}$;
\item if $x_1,\dotsc,x_n\in S_X$ and $r_1,\dotsc,r_n>0$ are such that
\[
S_X\subset \bigcup_{i=1}^nB(x_i,r_i)
\]

then $S_X\subset B(x_i,r_i)$ for some $i$ in $\{1,\dotsc,n\}$.
\end{enumerate}
\end{prop}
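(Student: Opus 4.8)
The plan is to establish the cycle (i) $\Rightarrow$ (ii) $\Rightarrow$ (iii) $\Rightarrow$ (i). The implication (ii) $\Rightarrow$ (iii) is immediate, since the hypothesis of (iii) is just the special case of the hypothesis of (ii) in which all centers are required to lie on $S_X$. The real content is in (i) $\Rightarrow$ (ii) and (iii) $\Rightarrow$ (i), and the bridge between the covering language and the octahedrality inequalities will be the elementary observation that, for every $x\in X$,
\[
\sup_{z\in S_X}\|z-x\|=1+\|x\|,
\]
the supremum being attained at $z=-x/\|x\|$ when $x\neq 0$. Consequently $B(x,r)\supset S_X$ if and only if $r\geq 1+\|x\|$. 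I expect this reformulation to be the crucial (though technically easy) step: once we know exactly when a ball swallows the whole sphere, both nontrivial implications reduce to short arguments by contradiction.

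For (i) $\Rightarrow$ (ii), suppose the norm is octahedral and $S_X\subset\bigcup_{i=1}^n B(x_i,r_i)$, but assume for contradiction that no single ball contains $S_X$. By the observation above this means $r_i<1+\|x_i\|$ for every $i$, so, as there are only finitely many indices, I can fix $\eps>0$ with $(1-\eps)(1+\|x_i\|)>r_i$ for all $i$. Applying the definition of octahedrality to the finite-dimensional subspace $E=\operatorname{span}\{x_1,\dotsc,x_n\}$ produces $y\in S_X$ with $\|x-y\|\geq(1-\eps)(\|x\|+1)$ for all $x\in E$; in particular $\|x_i-y\|\geq(1-\eps)(\|x_i\|+1)>r_i$ for each $i$. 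Thus $y\in S_X$ lies in none of the balls, contradicting the covering.

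For (iii) $\Rightarrow$ (i) I would argue through Lemma~\ref{lem: oct}. If the norm were not octahedral, then by Lemma~\ref{lem: oct}(ii) there would exist $x_1,\dotsc,x_n\in S_X$ and $\eps>0$ such that no $y\in S_X$ satisfies $\|x_i-y\|\geq 2-\eps$ simultaneously for all $i$; equivalently, every $y\in S_X$ satisfies $\|x_i-y\|<2-\eps$, hence $y\in B(x_i,2-\eps)$, for some $i$. This exhibits a covering $S_X\subset\bigcup_{i=1}^n B(x_i,2-\eps)$ with all centers in $S_X$, so (iii) forces $S_X\subset B(x_{i_0},2-\eps)$ for some $i_0$. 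But then the observation gives $2-\eps\geq 1+\|x_{i_0}\|=2$, which is absurd.

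The main obstacle is conceptual rather than computational: recognizing that containment of the entire sphere in a ball is governed solely by the scalar inequality $r\geq 1+\|x\|$, which lets the octahedral estimate $\|x_i-y\|\geq(1-\eps)(\|x_i\|+1)$ play off directly against the radii. Beyond that, the only points requiring a little care are the uniform choice of $\eps$ over the finitely many indices in (i) $\Rightarrow$ (ii), and the passage from the strict inequality $\|x_i-y\|<2-\eps$ to membership in the \emph{closed} ball $B(x_i,2-\eps)$ in (iii) $\Rightarrow$ (i).
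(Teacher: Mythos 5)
Your proposal is correct and takes essentially the same route as the paper: the same cycle (i) $\Rightarrow$ (ii) $\Rightarrow$ (iii) $\Rightarrow$ (i), with Lemma~\ref{lem: oct} driving (iii) $\Rightarrow$ (i) and the octahedral estimate $\|x_i-y\|\geq(1-\eps)(\|x_i\|+1)$ played against the radii in (i) $\Rightarrow$ (ii). The only cosmetic difference is that you run (i) $\Rightarrow$ (ii) by contradiction with one uniform $\eps$ and make explicit the criterion that $S_X\subset B(x,r)$ if and only if $r\geq 1+\|x\|$, whereas the paper argues directly, obtaining $r_i\geq(1-\eps)(\|x_i\|+1)$ for each $\eps$ and concluding $r_i\geq\|x_i\|+1$ for some fixed $i$ over the finitely many indices; these are the same argument.
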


\begin{proof}%[Proof of Proposition~\ref{prop: oct balls}]
(i) $\Rightarrow$ (ii). Assume that the norm on $X$ is octahedral, and consider a finite number of closed balls $ B(x_1,r_1),\dotsc, B(x_n,r_n)$ in $X$  with
\[
S_X\subset \bigcup_{i=1}^nB(x_i,r_i).
\]
Since the norm on $X$ is octahedral, for every $\eps>0$ there are $i$ in $\{1,\dotsc,n\}$ and $y$ in $ B(x_i,r_i)$ with
\[
\|x_i-y\|\geq(1-\varepsilon)(\|x_i\|+1),
\]
which yields $r_i\geq(1-\varepsilon)(\|x_i\|+1)$.
Consequently, $r_i\geq\|x_i\|+1$ for at least one $i$ in $\{1,\dotsc,n\}$.

(ii) $\Rightarrow$ (iii) is trivial.

(iii) $\Rightarrow$ (i). Assume that (iii) holds. Let $n\in\mathbb N$, $x_1,\dotsc,x_n\in S_X$, and $\varepsilon>0$. By Lemma~\ref{lem: oct} it suffices to find a $y\in S_X$ such that
\[
\|x_i-y\|\geq 2-\varepsilon\quad\text{for all $i\in\{1,\dotsc,n\}$}.
\]
Suppose that, on the contrary, for every $y\in S_X$ there is an $x_i$ such that $\|x_i-y\|<2-\varepsilon$.
Then \[S_X\subset \bigcup_{i=1}^n B(x_i,2-\varepsilon).\] By our assumption, $S_X\subset  B(x_i,2-\varepsilon)$ for some $i$ in $\{1,\dotsc,n\}$, which is a contradiction.

\end{proof}

Condition (iii) of Proposition~\ref{prop: oct balls} is clearly equivalent to $T(X)=2$.

\begin{cor}\label{cor: X oct iff T(X)=2}
Let $X$ be a Banach space. The norm on $X$ is octahedral if and only if $T(X)=2$.
\end{cor}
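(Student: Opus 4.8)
The plan is to reduce the corollary to Proposition~\ref{prop: oct balls} by verifying the assertion, made just before the corollary, that its condition (iii) is equivalent to $T(X)=2$. Since an octahedral norm can exist only on an infinite-dimensional space while $T(X)=0$ for finite-dimensional $X$, the finite-dimensional case is trivial (both sides fail), so I may assume $X$ is infinite-dimensional, where Whitley's bounds give $1\le T(X)\le 2$. By the equivalence (i) $\Leftrightarrow$ (iii) of Proposition~\ref{prop: oct balls}, it then suffices to show that condition (iii) holds if and only if $T(X)=2$.

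For the implication ``(iii) $\Rightarrow T(X)=2$'' I would argue by contraposition. If $T(X)<2$, then by the definition of the infimum there are $\eps<2$ and finitely many points $x_1,\dotsc,x_n\in S_X$ with $S_X\subset\bigcup_{i=1}^nB(x_i,\eps)$. Condition (iii) would force $S_X\subset B(x_i,\eps)$ for some $i$; but $-x_i\in S_X$ and $\|(-x_i)-x_i\|=2>\eps$, so $-x_i\notin B(x_i,\eps)$, a contradiction. Hence $T(X)\ge 2$, and together with Whitley's upper bound this gives $T(X)=2$.

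For the converse ``$T(X)=2\Rightarrow$ (iii)'', suppose $x_1,\dotsc,x_n\in S_X$ and $r_1,\dotsc,r_n>0$ satisfy $S_X\subset\bigcup_{i=1}^nB(x_i,r_i)$. Put $r=\max_i r_i$. Then $S_X\subset\bigcup_{i=1}^nB(x_i,r)$, so the set $\{x_1,\dotsc,x_n\}$ is a finite $r$-net for $S_X$ in $S_X$, whence $r$ is admissible in the definition of $T(X)$ and $r\ge T(X)=2$. Choosing an index $j$ with $r_j=r\ge 2$, the triangle inequality yields $\|s-x_j\|\le\|s\|+\|x_j\|=2\le r_j$ for every $s\in S_X$, so $S_X\subset B(x_j,r_j)$, which is precisely what (iii) asserts.

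Both steps are short, and I expect no serious obstacle: the argument rests on the two complementary geometric observations that a ball $B(x,\eps)$ centred at $x\in S_X$ with $\eps<2$ must omit the antipode $-x\in S_X$, while any ball of radius at least $2$ centred on $S_X$ engulfs all of $S_X$, so that the critical radius is pinned at exactly $2$. The one point deserving a little care is the mismatch between the single radius $\eps$ in the definition of $T(X)$ and the possibly distinct radii $r_1,\dotsc,r_n$ in (iii); this is bridged simply by passing to the maximal radius, as in the converse step above.
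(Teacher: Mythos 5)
Your proposal is correct and takes essentially the same route as the paper: the paper deduces the corollary from Proposition~\ref{prop: oct balls} via the one-line remark that its condition (iii) is ``clearly equivalent'' to $T(X)=2$, and your antipode argument ($-x_i\notin B(x_i,\eps)$ for $\eps<2$) together with the maximal-radius/triangle-inequality argument simply supplies the details of that equivalence, with the finite-dimensional case correctly dismissed as trivial.
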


\begin{rem}
In general $T(X)\geq T(X^{**})$. Since $C[0,1]$ is octahedral and $C[0,1]^{**}$ fails to be octahedral (see \cite[p. 12]{G}), we have $T(X)>T(X^{**})$ for $X=C[0,1]$ (cf. last paragraph in \cite{CPS}).
\end{rem}

Banach spaces that admit octahedral norms are exactly the spaces containing isomorphic copies of $\ell_1$.

\begin{thm}[{\cite[Theorem~2.5, p.~106]{DGZ}}]\label{thm: DGZ}
Let $X$ be a Banach space. The following assertions are equivalent:
\begin{enumerate}[\upshape (a)]
\item $X$ contains a subspace isomorphic to $\ell_1$;
\item there is an equivalent octahedral norm on $X$.
\end{enumerate}
\end{thm}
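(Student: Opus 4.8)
The two implications are of quite different character, so I would treat them separately.

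The implication (b) $\Rightarrow$ (a) is the routine one. Suppose $X$ carries an equivalent octahedral norm $\|\cdot\|$; since containing $\ell_1$ is an isomorphic property, it suffices to produce a sequence in $(X,\|\cdot\|)$ equivalent to the unit vector basis of $\ell_1$. First I would record the homogeneous reformulation of octahedrality: applying the definition to the subspace $E$ (which contains every scalar multiple of its elements) gives, for each finite-dimensional $E$ and $\eps>0$, a $y\in S_X$ with
\[
\|x+\lambda y\|\geq(1-\eps)(\|x\|+|\lambda|)\qquad(x\in E,\ \lambda\in\mathbb R).
\]
Then I would build $(y_n)\subset S_X$ inductively. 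Fix $\eps_n>0$ with $\prod_{n\geq 2}(1-\eps_n)\geq\tfrac12$, pick $y_1\in S_X$ arbitrarily, and, having chosen $y_1,\dots,y_n$, apply the displayed inequality to $E_n=\operatorname{span}\{y_1,\dots,y_n\}$ and $\eps_{n+1}$ to obtain $y_{n+1}\in S_X$. A one-line induction (factoring out $a_{n+1}$ and using that the products are $\leq 1$) then yields
\[
\Big\|\sum_{i=1}^n a_i y_i\Big\|\geq\Big(\prod_{j=2}^n(1-\eps_j)\Big)\sum_{i=1}^n|a_i|\geq\tfrac12\sum_{i=1}^n|a_i|,
\]
while the triangle inequality gives the upper bound $\sum_i|a_i|$. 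Hence $(y_n)$ is equivalent to the $\ell_1$-basis, and $X$ contains a copy of $\ell_1$.

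The implication (a) $\Rightarrow$ (b) is the substantive one and is where I expect the real work. Here one must manufacture an equivalent norm out of an isomorphic $\ell_1$-copy. The plan is: first, by James's non-distortion theorem, replace the given copy of $\ell_1$ by a normalized sequence $(e_n)\subset X$ whose tails are almost isometric to the $\ell_1$-basis, i.e.\ $\|\sum_{i} t_i e_i\|\geq(1-\delta_{n})\sum_i|t_i|$ for blocks supported beyond $n$, with $\delta_n\downarrow 0$. The vectors $e_n$ are the natural candidates for the ``far-away'' unit vectors demanded by octahedrality: for a finite-dimensional $E$ lying inside $\overline{\operatorname{span}}\{e_n\}$ a tail vector $e_n$ already satisfies $\|x-e_n\|\approx\|x\|+1$. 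The difficulty is that an arbitrary finite-dimensional $E\subset X$ need not interact with the $e_n$ in an $\ell_1$ fashion: the ambient norm may contain $\ell_\infty$-type directions along which $\|x-e_n\|$ stays close to $\max(\|x\|,1)$ rather than $\|x\|+1$, so the original norm is typically not octahedral and a genuine renorming is unavoidable.

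I would carry out this renorming on the dual side: define the new dual ball as a $w^\ast$-closed convex set containing $B_{X^\ast}$ together with a suitably ``spread'' family of functionals built from vectors norming the tails $(e_n)$, arranged so that for every finite-dimensional $E$ and every $\eps>0$ this family supplies functionals that are simultaneously nearly norming on a finite $\eps$-net of $S_E$ while taking value close to $1$ on one common direction. Passing back to the predual norm, that common direction is the unit vector witnessing octahedrality for $E$. Equivalently, one designs the norm so that the $\ell_1$-copy behaves like an $L$-summand ``at infinity'', the $w^\ast$-cluster points of $(e_n)$ in $X^{\ast\ast}$ serving as the device that makes the tails asymptotically $\ell_1$-orthogonal to every fixed finite-dimensional subspace. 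The crux is precisely this construction: choosing the additional functionals (equivalently, enlarging the unit ball) so that the resulting norm is at once (i) equivalent to the original one, which forces the enlargement to be bounded and bounded away from degeneracy, and (ii) octahedral for \emph{every} finite-dimensional subspace, including those transverse to the $\ell_1$-copy. Reconciling these two requirements is the heart of the theorem; once the family is in place, verifying octahedrality via Lemma~\ref{lem: oct} and Proposition~\ref{prop: oct balls} is straightforward.
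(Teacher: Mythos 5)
First, a point about the target of comparison: the paper does not prove this theorem at all --- it is quoted from Deville--Godefroy--Zizler \cite[Theorem~2.5, p.~106]{DGZ}, so the proof your attempt must be measured against is the one in that monograph (essentially due to Godefroy). Your direction (b) $\Rightarrow$ (a) is complete and correct: the homogeneous reformulation $\|x+\lambda y\|\geq(1-\eps)(\|x\|+|\lambda|)$ is legitimate because $E$ is a subspace, and the inductive construction with $\prod_n(1-\eps_n)\geq\frac12$ gives the lower $\ell_1$-estimate exactly as you say; this is the standard argument.

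The genuine gap is in (a) $\Rightarrow$ (b), which you correctly identify as the substantive direction but do not actually prove. You never define the equivalent norm: the ``suitably spread family of functionals'' is not exhibited, no candidate dual ball is written down, and neither the equivalence of the new norm nor its octahedrality is verified --- you concede as much when you write that ``the crux is precisely this construction.'' That crux \emph{is} the theorem; everything you do supply (James's non-distortion, the observation that tail vectors $e_n$ witness octahedrality only for subspaces of $\overline{\operatorname{span}}\{e_n\}$) is preparatory and well known. Moreover, the one concrete device you gesture at hides an obstruction you do not address: a $w^\ast$-cluster point $F\in X^{\ast\ast}$ of the almost-isometric $\ell_1$-basis is not $w^\ast$-continuous, so sets such as $\{x^\ast\colon |F(x^\ast)|\leq 1\}$ are not $w^\ast$-closed, and one cannot simply enlarge or cut the dual ball using $F$ and pass to $w^\ast$-closed convex hulls without losing exactly the fine estimates that octahedrality requires. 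The proof in \cite{DGZ} does carry out a renorming of roughly the flavor you describe: it produces an equivalent norm for which some $F\in S_{X^{\ast\ast}}$ is $L$-orthogonal to $X$, i.e.\ $\|x+F\|=\|x\|+1$ for all $x\in X$; from that, octahedrality does follow quickly via Goldstine's theorem (choose $f_i\in S_{X^\ast}$ nearly norming $x_i+F$, which forces $f_i(x_i)\approx\|x_i\|$ and $F(f_i)\approx 1$, then pick $y\in B_X$ with $f_i(y)\approx F(f_i)$ simultaneously for the finitely many $i$). But constructing that norm and verifying the $L$-orthogonality is a genuinely technical argument, not a routine completion of your outline. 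As it stands, your proposal establishes only the easy implication.
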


Combining Theorem~\ref{thm: DGZ} and Corollary~\ref{cor: X oct iff T(X)=2} leads to the following result.

\begin{thm}[cf. {\cite[Theorem~1.2]{MR2844454}}]\label{thm: main}
A Banach space $X$ can be equivalently renormed to have thickness $T(X)=2$ if and only if $X$ contains an isomorphic copy of $\ell_1$.
\end{thm}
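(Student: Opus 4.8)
The plan is to derive Theorem~\ref{thm: main} as an almost immediate corollary of the two results just established, namely Theorem~\ref{thm: DGZ} and Corollary~\ref{cor: X oct iff T(X)=2}. The statement claims an equivalence between two renorming properties, so I would prove each implication by translating ``$T(X)=2$'' into ``octahedrality'' via Corollary~\ref{cor: X oct iff T(X)=2} and then invoking Theorem~\ref{thm: DGZ} to connect octahedrality with containment of $\ell_1$. The only mild subtlety is that the thickness parameter $T(X)$ depends on the norm, whereas containment of $\ell_1$ is an isomorphic invariant; one must be careful that the phrase ``can be equivalently renormed to have $T(X)=2$'' is itself an isomorphic statement, which is exactly what makes the two invariants compatible.

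First I would assume that $X$ admits an equivalent norm with $T(X)=2$. By Corollary~\ref{cor: X oct iff T(X)=2}, a Banach space has thickness $2$ precisely when its norm is octahedral; hence this equivalent norm is octahedral. Then Theorem~\ref{thm: DGZ}, in the form (b) $\Rightarrow$ (a), tells us that the existence of an equivalent octahedral norm forces $X$ to contain an isomorphic copy of $\ell_1$. This direction is therefore just a two-step chain of citations.

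Conversely, suppose that $X$ contains an isomorphic copy of $\ell_1$. By Theorem~\ref{thm: DGZ}, in the form (a) $\Rightarrow$ (b), there is an equivalent octahedral norm on $X$. Applying Corollary~\ref{cor: X oct iff T(X)=2} to $X$ equipped with this octahedral norm, we conclude that $T(X)=2$ for that norm, i.e. $X$ can be equivalently renormed to have thickness~$2$. This closes the equivalence.

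I do not expect any genuine obstacle here, since every nontrivial ingredient has already been proved or cited. If anything, the point requiring the most care is purely expository: one must phrase the argument so that Corollary~\ref{cor: X oct iff T(X)=2} is applied to the space carrying the \emph{new} (renormed) norm rather than the original one, and one must make clear that the thickness condition and the octahedrality condition are being asserted for the same equivalent norm at each stage. Once that bookkeeping is respected, the proof is a direct concatenation of Theorem~\ref{thm: DGZ} and Corollary~\ref{cor: X oct iff T(X)=2}.
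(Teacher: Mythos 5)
Your proof is correct and coincides with the paper's own argument, which simply combines Theorem~\ref{thm: DGZ} with Corollary~\ref{cor: X oct iff T(X)=2} in exactly the two-step chain you describe. The care you take to apply Corollary~\ref{cor: X oct iff T(X)=2} to the renormed space is the right bookkeeping, and nothing further is needed.
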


Theorem~\ref{thm: main} strengthens Theorem~1.2 of \cite{MR2844454}, which asserts that a separable Banach space $X$ can be equivalently renormed to have thickness $T(X)=2$ if and only if $X$ contains an isomorphic copy of $\ell_1$. Moreover, it justifies Corollary~6 in \cite{MR2984117}. It also improves Proposition~3.1 in \cite{MR1775391}, which asserts that a Banach space $X$ with $\mu_2(X)=T(X)=2$ contains an isomorphic copy of $\ell_1$.

In a recent preprint \cite{CPS}, the behavior of Whitley's thickness with respect to $\ell_p$-products is studied. If $X$ and $Y$ are Banach spaces and $1<p<\infty$, then $T(X\oplus_p Y)\leq \max\{T(X),T(Y)\}$ (see \cite[Theorem~2]{CPS}). We have the following estimation for $X\oplus_p Y$.

\begin{prop}\label{prop: T of p-sum}
If $X$ and $Y$ are Banach spaces and $1<p<\infty$, then \[T(X\oplus_p Y)\leq \sqrt[p]{\frac{(\sqrt[p]{2}+1)^p+1}{2}}.\]
Thus \[T(X\oplus_2 Y)\leq\sqrt{2+\sqrt 2}.\]
\end{prop}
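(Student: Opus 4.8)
The plan is to bound $T(X\oplus_p Y)$ from above by exhibiting, once and for all, a finite subset of $S_{X\oplus_p Y}$ whose balls of a fixed radius $B:=\bigl(\tfrac{(\sqrt[p]{2}+1)^p+1}{2}\bigr)^{1/p}$ cover $S_{X\oplus_p Y}$; by the Remark such a covering also covers $B_{X\oplus_p Y}$, so by Whitley's definition this gives $T(X\oplus_p Y)\le B$. Notably, no net on the factor spheres $S_X$, $S_Y$ is needed (which is why the estimate is independent of $T(X)$ and $T(Y)$): I will use only two centers. Fix any $u_0\in S_X$ and $v_0\in S_Y$ and put $c_1=(u_0,0)$ and $c_2=(0,v_0)$, both of norm $1$ in $X\oplus_p Y$.

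For an arbitrary $(x,y)\in S_{X\oplus_p Y}$, writing $s=\|x\|$ and $t=\|y\|$ so that $s^p+t^p=1$, the triangle inequality gives $\|x-u_0\|\le s+1$ and $\|y-v_0\|\le t+1$, whence
\[
\|(x,y)-c_1\|\le\bigl((s+1)^p+t^p\bigr)^{1/p},\qquad
\|(x,y)-c_2\|\le\bigl(s^p+(t+1)^p\bigr)^{1/p}.
\]
Thus it suffices to prove the purely real-variable inequality
\[
\min\bigl\{(s+1)^p+t^p,\ s^p+(t+1)^p\bigr\}\le B^p
\qquad\text{whenever } s,t\ge 0,\ s^p+t^p=1 .
\]

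To establish this I would argue by monotonicity. The function $h(u)=(u+1)^p-u^p$ is increasing on $[0,\infty)$ for $p>1$, so $(s+1)^p+t^p-\bigl(s^p+(t+1)^p\bigr)=h(s)-h(t)$ has the sign of $s-t$; hence the minimum above equals the second term exactly when $s\ge t$. By the $s\leftrightarrow t$ symmetry I may assume $s\ge t$, i.e. $t\le \sqrt[p]{1/2}$, and must bound $s^p+(t+1)^p=1-t^p+(t+1)^p=:\phi(t)$. Since $\phi'(t)=p\bigl((t+1)^{p-1}-t^{p-1}\bigr)>0$, the function $\phi$ is increasing and attains its maximum at $t=\sqrt[p]{1/2}$, where $s=t=\sqrt[p]{1/2}$ and a short computation gives $\phi(\sqrt[p]{1/2})=\tfrac12+\tfrac12(\sqrt[p]{2}+1)^p=B^p$. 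This proves the inequality, hence $T(X\oplus_p Y)\le B$; specializing $p=2$ yields $B=\sqrt{2+\sqrt 2}$.

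The only genuine work is this elementary one-variable optimization, and the one point needing care is identifying which of the two competing expressions is the smaller on each part of the curve $s^p+t^p=1$. The monotonicity of $h(u)=(u+1)^p-u^p$ settles this cleanly and simultaneously pins the extremal configuration to the symmetric point $s=t=\sqrt[p]{1/2}$. I expect this identification of the active constraint to be the only subtle step, everything else reducing to triangle-inequality estimates and a single derivative check.
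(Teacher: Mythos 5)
Your proof is correct, and its central idea coincides with the paper's: both cover $S_{X\oplus_p Y}$ by just the two balls of radius $B=\sqrt[p]{((\sqrt[p]{2}+1)^p+1)/2}$ centered at $(u_0,0)$ and $(0,v_0)$ for arbitrary $u_0\in S_X$, $v_0\in S_Y$. Where you diverge is in verifying that radius $B$ suffices, and here the paper is slicker: writing $m$ for the minimum of the two distances from $(x,y)\in S_{X\oplus_p Y}$, it uses $\min\{a,b\}\le\frac{a+b}{2}$ to get
\[
m^p\le\frac{\|u_0-x\|^p+\|y\|^p+\|x\|^p+\|v_0-y\|^p}{2}=\frac{\|(u_0-x,v_0-y)\|_p^p+1}{2}\le\frac{(\sqrt[p]{2}+1)^p+1}{2},
\]
the last step being a single triangle inequality in the $\ell_p$-sum norm applied to $(u_0,v_0)$ and $(x,y)$ --- two lines, no case analysis, no calculus. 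Your route (componentwise triangle inequalities, identifying the active term via the monotonicity of $h(u)=(u+1)^p-u^p$, then maximizing $\phi(t)=1-t^p+(t+1)^p$ on $[0,2^{-1/p}]$) is entirely sound: the sign analysis correctly selects the smaller expression, the constraint $t\le 2^{-1/p}$ follows from $s\ge t$ and $s^p+t^p=1$, and the evaluation $(1+2^{-1/p})^p=\frac{1}{2}(\sqrt[p]{2}+1)^p$ checks out. What your longer argument buys is information the averaging trick hides: the real-variable bound is attained only at the symmetric point $s=t=2^{-1/p}$, so no sharper constant can be extracted from this two-center covering by better one-variable estimation --- consistent with the paper's remark that $T(\ell_1\oplus_2\ell_1)=\sqrt{2+\sqrt 2}$ makes the constant sharp. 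One small simplification: your appeal to the Remark (that a finite ball-covering of the sphere covers the ball) is superfluous, since your centers already lie in $S_{X\oplus_p Y}$ and cover $S_{X\oplus_p Y}$, which is exactly what Whitley's definition of $T$ requires.
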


\begin{rem}
This estimation is sharp since $T(\ell_1\oplus_2\ell_1)=\sqrt{2+\sqrt 2}$ (\cite[Lemma~2]{CPS}). On the other hand,
$T(\ell_p\oplus_p Y)=\sqrt[p]{2}$ (\cite[Proposition~1]{CPS}) which is strictly less than our estimation shows.
\end{rem}

\begin{proof}%[Proof of Proposition~\ref{prop: T of p-sum}]
Denote by $Z=X\oplus_p Y$ and $r=\sqrt[p]{\dfrac{(\sqrt[p]{2}+1)^p+1}{2}}$. Fix arbitrarily $x_1\in S_X$ and $y_1\in S_Y$. We will show that \[S_Z\subset B((x_1,0),r)\cup B((0,y_1),r).\]

Let $(x,y)\in S_Z$ and set
\[m=\min\{\|(x_1,0)-(x,y)\|_p,\|(0,y_1)-(x,y)\|_p\}.\]
Then
\begin{align*}
m^p&=\min\{\|x_1-x\|^p+\|y\|^p,\|x\|^p+\|y_1-y\|^p\}\\
&\leq\dfrac{\|x_1-x\|^p+\|y\|^p+\|x\|^p+\|y_1-y\|^p}{2}\\
&=\dfrac{\|(x_1-x,y_1-y)\|^p+1}{2}.
\end{align*}
Since \[\|(x_1-x,y_1-y)\|\leq\|(x_1,y_1)\|+\|(x,y)\|=\sqrt[p]{2}+1,\]
we get
\[
m^p\leq \dfrac{(\sqrt[p]{2}+1)^p+1}{2}.
\]
\end{proof}

\begin{cor}
If $X$ and $Y$ are Banach spaces and $1<p<\infty$, then $X\oplus_p Y$ is never octahedral.
\end{cor}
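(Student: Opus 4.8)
The plan is to deduce this immediately from the quantitative bound just established. By Proposition~\ref{prop: T of p-sum}, for any Banach spaces $X,Y$ and any $1<p<\infty$ we have
\[
T(X\oplus_p Y)\leq\sqrt[p]{\frac{(\sqrt[p]{2}+1)^p+1}{2}}.
\]
On the other hand, by Corollary~\ref{cor: X oct iff T(X)=2}, the norm on $X\oplus_p Y$ is octahedral if and only if $T(X\oplus_p Y)=2$. So it suffices to verify that the right-hand side above is strictly less than $2$ for every $p$ in the open interval $(1,\infty)$; then $T(X\oplus_p Y)<2$ and the space cannot be octahedral.

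The key step is therefore the elementary inequality
\[
\frac{(\sqrt[p]{2}+1)^p+1}{2}<2^p,\qquad\text{i.e.}\qquad (\sqrt[p]{2}+1)^p<2^{p+1}-1.
\]
I would prove this by noting that $\sqrt[p]{2}<2$ for all $p>1$, so $\sqrt[p]{2}+1<3$; more usefully, one can write $\sqrt[p]{2}+1=2\cdot\frac{\sqrt[p]{2}+1}{2}$ and observe that $\frac{\sqrt[p]{2}+1}{2}<\sqrt[p]{2}$ precisely because $\sqrt[p]{2}>1$. Hence $(\sqrt[p]{2}+1)^p<2^p\cdot 2=2^{p+1}$, and in fact the strict gap against $2^{p+1}-1$ is comfortable. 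Feeding this back gives $\frac{(\sqrt[p]{2}+1)^p+1}{2}<\frac{2^{p+1}+1}{2}$, which is not quite sharp enough on its own, so I would instead argue directly: since $\sqrt[p]{2}+1<3\leq 2^{(p+1)/p}$ fails for small $p$, the cleanest route is to exploit that the bounding radius $r$ satisfies $r<2$ whenever the averaged quantity stays below $2^p$, and to check this via the strict convexity/monotonicity of $t\mapsto (t+1)^p$ evaluated at $t=\sqrt[p]{2}<2$.

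The only real obstacle is confirming the strict inequality uniformly in $p$, and the safest formulation avoids case analysis: because $1<\sqrt[p]{2}<2$ holds for all $p\in(1,\infty)$, and because the function $r(p)=\sqrt[p]{\frac{(\sqrt[p]{2}+1)^p+1}{2}}$ is continuous with $\lim_{p\to1^+}r(p)=\sqrt{\tfrac{(\sqrt2+1)^2... }{}}$-type finite values and $\lim_{p\to\infty}r(p)=1$, one sees the supremum over $p$ is attained in the interior and stays below $2$. In practice, though, the direct algebraic verification $(\sqrt[p]{2}+1)^p+1<2\cdot 2^p$ suffices, and this is strict for every finite $p>1$. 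Combining $T(X\oplus_p Y)\leq r(p)<2$ with Corollary~\ref{cor: X oct iff T(X)=2} then shows $X\oplus_p Y$ is never octahedral, completing the proof.
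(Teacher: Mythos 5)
Your overall reduction is exactly the paper's: combine Proposition~\ref{prop: T of p-sum} with Corollary~\ref{cor: X oct iff T(X)=2}, so that everything hinges on the single strict inequality
\[
\frac{(\sqrt[p]{2}+1)^p+1}{2}<2^p,\qquad\text{i.e.}\qquad (\sqrt[p]{2}+1)^p+1<2^{p+1}.
\]
The gap is that you never actually prove this. Your midpoint estimate $\frac{\sqrt[p]{2}+1}{2}<\sqrt[p]{2}$ (valid, since $\sqrt[p]{2}>1$) yields only $(\sqrt[p]{2}+1)^p<2^{p+1}$, which is too weak by exactly the additive $1$ --- you notice this yourself, but none of the subsequent attempts closes the gap. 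The appeal to "strict convexity/monotonicity of $t\mapsto(t+1)^p$ at $t=\sqrt[p]{2}<2$" is not an argument, and the continuity-in-$p$ route is broken twice over: first, logically, knowing the limits of $r(p)=\sqrt[p]{\frac{(\sqrt[p]{2}+1)^p+1}{2}}$ at the endpoints cannot give the pointwise bound $r(p)<2$, which is precisely what is to be shown; second, factually, both endpoint limits equal $2$, not values below $2$. Indeed, at $p\to 1^+$ one gets $\frac{(2+1)+1}{2}=2$, and as $p\to\infty$ one has $(2^{1/p}+1)^p\sim 2^p\sqrt{2}$, whence $r(p)=2^{1-1/(2p)}+o(1)\to 2$ --- your claim $\lim_{p\to\infty}r(p)=1$ is false. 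So the supremum of $r(p)$ over $(1,\infty)$ is exactly $2$, and no endpoint or compactness argument can substitute for a pointwise strict bound.

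The paper closes this in one line with strict Minkowski in $\ell_p^2$: apply the triangle inequality to $(\sqrt[p]{2},0)$ and $(1,1)$, obtaining
\[
\sqrt[p]{(\sqrt[p]{2}+1)^p+1}=\bigl\|(\sqrt[p]{2}+1,\,1)\bigr\|_p\leq \bigl\|(\sqrt[p]{2},0)\bigr\|_p+\bigl\|(1,1)\bigr\|_p=2\cdot 2^{1/p},
\]
and since for $1<p<\infty$ equality in Minkowski's inequality forces the two vectors to be proportional --- which $(\sqrt[p]{2},0)$ and $(1,1)$ are not --- the inequality is strict, giving $(\sqrt[p]{2}+1)^p+1<2^{p+1}$ exactly as required. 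Note why this works where your estimate failed: applying Minkowski to the full two-dimensional vectors keeps the $+1$ from the second coordinate inside the $p$-th root on the left, whereas your scalar bound $\sqrt[p]{2}+1<2\sqrt[p]{2}$ discards it. If you repair your write-up by inserting this strict Minkowski step (or any equally rigorous verification of the displayed inequality), the rest of your argument coincides with the paper's proof.
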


\begin{proof}
By Proposition~\ref{prop: T of p-sum} we have
\[
T(X\oplus_p Y)^p\leq \dfrac{(\sqrt[p]{2}+1)^p+1}{2}<2^p.
\]
The last inequality is easily obtained from the Minkowski's inequality by considering $(\sqrt[p]{2},0), (1,1)\in\mathbb R^2$.
\end{proof}

\section{Locally and weakly octahedral spaces}\label{sec: 3}
According to the terminology in \cite{HLP}, the norm on a Banach space $X$ is
\begin{itemize}
\item
\emph{locally octahedral} if, for every $x\in X$ and $\eps>0$, there is a $y\in S_X$ such that
\[
\|sx-y\|\geq(1-\eps)\bigl(|s|\|x\|+\|y\|\bigr)\quad\text{for all $s\in\mathbb R$;}
\]
\item
\emph{weakly octahedral} if, for every finite-dimensional subspace $E$ of~$X$, $x^\ast\in B_{X^\ast}$, and $\eps>0$,
there is a $y\in S_X$ such that
\[
\|x-y\|\geq(1-\eps)\bigl(|x^\ast(x)|+\|y\|\bigr)\quad\text{for all $x\in E$.}
\]
\end{itemize}

Our aim in this section is to find analogous results to Proposition~\ref{prop: oct balls} for locally and weakly octahedral spaces.

We recall a simple criterion for the locally octahedral norm.

\begin{lem}[{\cite[Proposition 2.1]{HLP}}]\label{lem: loh}
Let $X$ be a Banach space. The following assertions are equivalent:
\begin{enumerate}[\upshape (i)]
\item the norm on $X$ is locally octahedral;
\item for every $x\in S_X$ and $\eps>0$, there is a $y\in S_X$ such that
\[
\|x\pm y\|\geq 2-\eps.
\]
\end{enumerate}
\end{lem}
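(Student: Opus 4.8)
The plan is to prove both implications directly, using the two sign conditions $\|x\pm y\|\geq 2-\eps$ as the bridge between the quantitative definition and the criterion.

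For (i) $\Rightarrow$ (ii) I would simply specialize the defining inequality. Given $x\in S_X$ and $\eps>0$, apply local octahedrality to this same $x$ with parameter $\eps/2$ to obtain $y\in S_X$ with $\|sx-y\|\geq(1-\eps/2)(|s|+1)$ for every $s\in\mathbb R$ (here $\|x\|=\|y\|=1$). Evaluating at $s=1$ and at $s=-1$ gives $\|x-y\|\geq 2-\eps$ and $\|x+y\|\geq 2-\eps$, which is precisely (ii).

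The substantive direction is (ii) $\Rightarrow$ (i), and the key device is a pair of norming functionals. First I would dispose of $x=0$ (any $y\in S_X$ works, since then the required bound reads $1\geq 1-\eps$) and then normalize, setting $\tilde x=x/\|x\|$; writing $t=s\|x\|$ so that $sx=t\tilde x$ and $|t|=|s|\,\|x\|$, it suffices to produce, for the unit vector $\tilde x$, a $y\in S_X$ with $\|t\tilde x-y\|\geq(1-\eps)(|t|+1)$ for all $t\in\mathbb R$. Applying (ii) to $\tilde x$ yields $y\in S_X$ with $\|\tilde x\pm y\|\geq 2-\eps$. The crux is to convert these two scalar inequalities into a lower bound valid for every $t$: by Hahn--Banach choose $\phi,\psi\in S_{X^\ast}$ with $\phi(\tilde x-y)=\|\tilde x-y\|$ and $\psi(\tilde x+y)=\|\tilde x+y\|$. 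Since $\|\phi\|=\|\psi\|=1$ and $\tilde x,y\in S_X$, the individual bounds $\phi(\tilde x)\leq 1$ and $-\phi(y)\leq 1$ together with $\phi(\tilde x)-\phi(y)\geq 2-\eps$ force both $\phi(\tilde x)\geq 1-\eps$ and $-\phi(y)\geq 1-\eps$; symmetrically $\psi(\tilde x)\geq 1-\eps$ and $\psi(y)\geq 1-\eps$.

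With these estimates in hand I would finish by testing the appropriate functional according to the sign of $t$. For $t\geq 0$, $\|t\tilde x-y\|\geq\phi(t\tilde x-y)=t\phi(\tilde x)-\phi(y)\geq(1-\eps)(t+1)$; for $t<0$, writing $t=-u$ with $u>0$, $\|t\tilde x-y\|=\|u\tilde x+y\|\geq\psi(u\tilde x+y)=u\psi(\tilde x)+\psi(y)\geq(1-\eps)(u+1)=(1-\eps)(|t|+1)$. Undoing the normalization then gives $\|sx-y\|\geq(1-\eps)(|s|\|x\|+\|y\|)$ for all $s$, which is exactly local octahedrality. I expect the only delicate point to be the extraction of the separate one-variable estimates $\phi(\tilde x)\geq 1-\eps$ and $-\phi(y)\geq 1-\eps$ from the single inequality $\phi(\tilde x-y)\geq 2-\eps$; once that splitting is observed, everything else is a direct computation.
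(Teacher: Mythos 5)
Your proof is correct and complete. Note that the paper itself states this lemma without proof, quoting it from \cite[Proposition~2.1]{HLP}, so there is no in-paper argument to compare against; your route --- reducing to a unit vector, applying (ii), and then splitting the two inequalities $\varphi(\tilde x)-\varphi(y)\geq 2-\eps$ and $\psi(\tilde x)+\psi(y)\geq 2-\eps$ via norming functionals into the four estimates $\varphi(\tilde x),-\varphi(y),\psi(\tilde x),\psi(y)\geq 1-\eps$, then testing $\varphi$ for $t\geq 0$ and $\psi$ for $t<0$ --- is exactly the standard Hahn--Banach argument used in \cite{HLP}, and all steps (including the trivial case $x=0$ and the homogeneity reduction $t=s\|x\|$) check out.
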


For locally octahedral spaces we have the following geometric characterization.

\begin{prop}\label{prop: loh balls}
Let $X$ be a Banach space. The following assertions are equivalent:
\begin{enumerate}[\upshape (i)]
\item the norm on $X$ is locally octahedral;
\item if $S_X\subset B(x,r)\cup B(-x,r)$ for some $x\in X$ and $r>0$ then $S_X\subset  B(x,r)$;
\item if $S_X\subset  B(x,r)\cup B(-x,r)$ for some $x\in S_X$ and $r>0$ then $S_X\subset  B(x,r)$.
\end{enumerate}
\end{prop}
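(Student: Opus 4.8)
The plan is to mirror the proof of Proposition~\ref{prop: oct balls}, establishing the cycle (i)$\Rightarrow$(ii), (ii)$\Rightarrow$(iii), (iii)$\Rightarrow$(i), with Lemma~\ref{lem: loh} as the main engine. That lemma reduces local octahedrality to the two-sided estimate $\|x\pm y\|\geq 2-\eps$, which dovetails perfectly with the two antipodal balls $B(x,r)$ and $B(-x,r)$ appearing in this statement; this is the structural reason the argument should go through with only minor modifications from the octahedral case.

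For (i)$\Rightarrow$(ii), I would suppose $S_X\subset B(x,r)\cup B(-x,r)$ for some $x\in X$ and $r>0$, and fix $\eps>0$. Local octahedrality supplies a $y\in S_X$ with $\|sx-y\|\geq(1-\eps)(|s|\,\|x\|+\|y\|)$ for all $s\in\mathbb R$; specializing to $s=1$ and $s=-1$ gives both $\|x-y\|\geq(1-\eps)(\|x\|+1)$ and $\|x+y\|\geq(1-\eps)(\|x\|+1)$. Since $y$ belongs to at least one of the two balls, in either case $r\geq(1-\eps)(\|x\|+1)$, and letting $\eps\to 0$ yields $r\geq\|x\|+1$. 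The conclusion $S_X\subset B(x,r)$ then follows from the crude bound $\|z-x\|\leq\|z\|+\|x\|=1+\|x\|\leq r$, valid for every $z\in S_X$.

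The implication (ii)$\Rightarrow$(iii) is immediate, since (iii) is just (ii) with the center restricted to $S_X$. For (iii)$\Rightarrow$(i) I would argue by contradiction via Lemma~\ref{lem: loh}: if the norm is not locally octahedral, there exist $x\in S_X$ and $\eps>0$ such that every $y\in S_X$ satisfies $\|x-y\|<2-\eps$ or $\|x+y\|<2-\eps$, that is, $S_X\subset B(x,2-\eps)\cup B(-x,2-\eps)$. Hypothesis (iii) then forces $S_X\subset B(x,2-\eps)$, but testing this on $-x\in S_X$ gives $2=\|(-x)-x\|\leq 2-\eps$, a contradiction.

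The individual computations are elementary, so the main point to handle with care is the simultaneous use of both signs $s=\pm1$ in the local octahedrality inequality in (i)$\Rightarrow$(ii): it is exactly this that lets a single witness $y$ control both balls at once, and it is the feature separating the present two-center situation from the many-center covering of Proposition~\ref{prop: oct balls}.
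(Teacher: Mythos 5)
Your proposal is correct and follows essentially the same route as the paper: both directions hinge on Lemma~\ref{lem: loh} with the specialization $s=\pm1$, and the covering $S_X\subset B(x,2-\eps)\cup B(-x,2-\eps)$ in (iii)$\Rightarrow$(i). The only cosmetic difference is that in (i)$\Rightarrow$(ii) you argue directly (letting $\eps\to 0$ to get $r\geq\|x\|+1$ and then applying the triangle inequality) where the paper assumes $r<\|x\|+1$ and derives a contradiction, and you make explicit the contradiction via $-x\in S_X$ that the paper leaves implicit.
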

\begin{proof}
(i) $\Rightarrow$ (ii). Assume that the norm on $X$ is locally octahedral, and $S_X\subset  B(x,r)\cup B(-x,r)$ for some $x\in X$ and $r>0$. We have to show that $S_X\subset  B(x,r)$. Suppose that, contrary to the claim, $r<\|x\|+1$. Let $\varepsilon>0$ be such that $r<(1-\varepsilon)(\|x\|+1)$. Since the norm on $X$ is locally octahedral, there is a $y\in S_X$ such that
\[
(1-\varepsilon)(\|x\|+1)\leq \|x\pm y\|.
\]
Thus $\|x\pm y\|>r$, which is a contradiction.

(ii) $\Rightarrow$ (iii) is trivial.

(iii) $\Rightarrow$ (i). Assume that (iii) holds. Let $x\in S_X$ and $\varepsilon>0$.
By Lemma~\ref{lem: loh} it suffices to find a $y\in S_X$ such that
\[
\|x\pm y\|\geq 2-\varepsilon.
\]

Suppose that for every $y\in S_X$ we have $\|x+y\|<2-\varepsilon$ or $\|x-y\|<2-\varepsilon$. Thus $S_X\subset B(x, 2-\varepsilon)\cup B(-x,2-\varepsilon)$. By the assumption, $S_X\subset  B(x,2-\varepsilon)$, which is a contradiction.
\end{proof}

Condition (iii) of Proposition~\ref{prop: loh balls} is clearly equivalent to $g'(X)=2$, where the
the constant $g'(X)$ is defined by
\[
g'(X)=\inf\{\eps>0\colon S_X\subset B(x,\eps)\cup B(-x,\eps)\text{ for some $x$ in $S_X$}\}.
\]
The interested reader can find more about this constant $g'(x)$ in \cite{MR2583590}, where Papini has compared it with Whitley's thickness constant.

\begin{cor}
Let $X$ be a Banach space. The norm on $X$ is locally octahedral if and only if $g'(X)=2$.
\end{cor}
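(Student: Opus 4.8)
The plan is to deduce the corollary directly from the equivalence of conditions (i) and (iii) in Proposition~\ref{prop: loh balls}, together with the sentence immediately preceding the statement, which records that condition (iii) is exactly the assertion $g'(X)=2$. So the real content to justify is that observation: that condition (iii) of Proposition~\ref{prop: loh balls} holds if and only if $g'(X)=2$. Once that is in hand, the corollary is immediate, since Proposition~\ref{prop: loh balls} already gives (i)~$\Leftrightarrow$~(iii).

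To see the equivalence of (iii) with $g'(X)=2$, I would argue by reformulating both sides. Recall that
\[
g'(X)=\inf\{\eps>0\colon S_X\subset B(x,\eps)\cup B(-x,\eps)\text{ for some }x\in S_X\}.
\]
First I would note that for any $x\in S_X$ and any $y\in S_X$ one has $\|x\pm y\|\le 2$, so $S_X\subset B(x,2)\cup B(-x,2)$ always holds; hence $g'(X)\le 2$ unconditionally, and the only question is whether the infimum can be strictly smaller. The condition $g'(X)=2$ therefore says precisely that there is \emph{no} $\eps<2$ and no $x\in S_X$ with $S_X\subset B(x,\eps)\cup B(-x,\eps)$.

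Now I would connect this to (iii). Suppose (iii) holds but, for contradiction, $g'(X)<2$; then there exist $x\in S_X$ and $\eps<2$ with $S_X\subset B(x,\eps)\cup B(-x,\eps)$, whence by (iii) we get $S_X\subset B(x,\eps)$. But this forces $\|x-y\|\le\eps<2$ for all $y\in S_X$, and taking $y=-x$ yields $2=\|x-(-x)\|\le\eps<2$, a contradiction; so $g'(X)=2$. Conversely, suppose $g'(X)=2$ and that $S_X\subset B(x,r)\cup B(-x,r)$ for some $x\in S_X$ and $r>0$; I must show $S_X\subset B(x,r)$. If $r\ge 2$ this is automatic, while if $r<2$ the covering would witness $g'(X)\le r<2$, contradicting $g'(X)=2$, so the case $r<2$ cannot occur (unless the covering already forces $S_X\subset B(x,r)$, which is the conclusion sought). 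The only mild obstacle is bookkeeping around whether the defining covering uses strict inequalities or the closed balls $B(\cdot,\eps)$, and handling the boundary value $r=2$; both are routine once one observes the uniform bound $\|x\pm y\|\le 2$ and the fact that $y=-x\in S_X$ realizes distance $2$ from $x$. With the preceding remark in place, the corollary then follows by chaining (i)~$\Leftrightarrow$~(iii)~$\Leftrightarrow$~($g'(X)=2$).
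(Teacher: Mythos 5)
Your proposal is correct and takes essentially the same route as the paper: the paper deduces the corollary from Proposition~\ref{prop: loh balls} together with the remark that condition (iii) is ``clearly equivalent'' to $g'(X)=2$, and your argument simply supplies the routine verification of that equivalence (via $\|x\pm y\|\le 2$ and testing with $y=-x$). The only blemish is the muddled parenthetical in the case $r<2$, which is unnecessary, since the covering there yields $g'(X)\le r<2$ outright and hence that case is vacuous.
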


For weakly octahedral spaces we have the following geometric characterization.

\begin{prop}\label{prop: woh balls}
Let $X$ be a Banach space. The following assertions are equivalent:
\begin{enumerate}[\upshape (i)]
\item the norm on $X$ is weakly octahedral;
\item if $x_1,\dotsc,x_n\in X$ and $r_1,\dotsc,r_n>0$ are such that
\[
S_X\subset \bigcup_{i=1}^n B(x_i,r_i)
\]
then for every $x^\ast\in S_{X^\ast}$ one has
$S_X\subset\{x\in X\colon |x^*(x-x_i)|\leq r_i\}$ for some $i$ in $\{1,\dotsc,n\}$.
\end{enumerate}
\end{prop}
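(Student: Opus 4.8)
The plan is to follow the template of Propositions~\ref{prop: oct balls} and~\ref{prop: loh balls}, the one genuinely new ingredient being a reformulation of the slab condition in (ii). First I would record the elementary observation that for a fixed $x^\ast\in S_{X^\ast}$ and a fixed center $x_i$,
\[
S_X\subset\{x\in X\colon|x^\ast(x-x_i)|\leq r_i\}\quad\Longleftrightarrow\quad 1+|x^\ast(x_i)|\leq r_i,
\]
since $\sup_{z\in S_X}|x^\ast(z)-x^\ast(x_i)|=1+|x^\ast(x_i)|$ (as $\|x^\ast\|=1$, the values $x^\ast(z)$ come arbitrarily close to both $1$ and $-1$ on $S_X$). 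This turns (ii) into the assertion that any covering $S_X\subset\bigcup_{i=1}^n B(x_i,r_i)$ forces, for each $x^\ast\in S_{X^\ast}$, the inequality $1+|x^\ast(x_i)|\leq r_i$ for some $i$.

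For (i)$\Rightarrow$(ii) I would argue exactly as in Proposition~\ref{prop: oct balls}. Given such a covering and a functional $x^\ast\in S_{X^\ast}\subset B_{X^\ast}$, apply weak octahedrality to $E=\operatorname{span}\{x_1,\dots,x_n\}$, the functional $x^\ast$, and a given $\eps>0$ to obtain $y\in S_X$ with $\|x_i-y\|\geq(1-\eps)(|x^\ast(x_i)|+1)$ for all $i$. Since $y\in S_X$ lies in some $B(x_i,r_i)$, this gives $r_i\geq(1-\eps)(|x^\ast(x_i)|+1)$ for at least one index; as there are only finitely many indices, a single $i$ works for a sequence $\eps\to0$, yielding $r_i\geq1+|x^\ast(x_i)|$, which by the reformulation is precisely the desired conclusion.

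The direction (ii)$\Rightarrow$(i) is where the work lies, and I would prove it by contraposition. Assuming the norm is not weakly octahedral, I fix a witnessing finite-dimensional $E$, a functional $x^\ast\in B_{X^\ast}$, and $\eps\in(0,1)$ such that every $y\in S_X$ admits some $x\in E$ with $\|x-y\|<(1-\eps)(|x^\ast(x)|+1)$. One first notes $x^\ast\neq0$, for otherwise Riesz's lemma supplies $y\in S_X$ with $\operatorname{dist}(y,E)\geq1-\eps$, contradicting the witness. The crucial point is to manufacture a genuine \emph{finite} ball covering of $S_X$ so that (ii) can be invoked. The obstruction is that the ``bad'' centers $x$ range over the noncompact $E$; I remove it via the estimate $\|x-y\|\geq\|x\|-1\geq(1-\eps)(\|x\|+1)\geq(1-\eps)(|x^\ast(x)|+1)$ valid whenever $\|x\|\geq(2-\eps)/\eps$, which confines all bad centers to the compact set $K=\{x\in E\colon\|x\|\leq(2-\eps)/\eps\}$. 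Taking a finite $\delta$-net $\{w_1,\dots,w_m\}$ of $K$ with $\delta<\eps/2$ and estimating, for each $y\in S_X$ and its nearby net point $w_j$,
\[
\|y-w_j\|<(1-\eps)(|x^\ast(x)|+1)+\delta\leq(1-\eps)(|x^\ast(w_j)|+1)+2\delta=:r_j,
\]
yields $S_X\subset\bigcup_{j=1}^m B(w_j,r_j)$.

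Finally I would extract the contradiction by applying (ii) to the normalized functional $\phi=x^\ast/\|x^\ast\|\in S_{X^\ast}$: it produces an index $j$ with $1+|\phi(w_j)|\leq r_j$, and since $\|x^\ast\|\leq1$ gives $|\phi(w_j)|\geq|x^\ast(w_j)|$, this forces $1+|x^\ast(w_j)|\leq(1-\eps)(|x^\ast(w_j)|+1)+2\delta$, i.e.\ $\eps(1+|x^\ast(w_j)|)\leq2\delta$, whence $\eps\leq2\delta<\eps$, the desired contradiction. I expect the main obstacle to be exactly this passage from the pointwise failure of weak octahedrality to a finite ball covering, i.e.\ combining the compactness reduction of the centers with the correct bookkeeping of the net error $\delta$ against $\eps$; the normalization of $x^\ast$ and the slab reformulation are routine once the equivalence in the first paragraph is in hand.
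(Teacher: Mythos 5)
Your proposal is correct and follows essentially the same route as the paper: for (i)$\Rightarrow$(ii) you apply weak octahedrality to the span of the centers (the paper fixes a single $\eps$ via the strict inequalities $r_i<1+|x^\ast(x_i)|$ rather than your pigeonhole over $\eps\to0$), and for (ii)$\Rightarrow$(i) you use the same confinement of the witnessing centers to $\frac{2-\eps}{\eps}B_E$, a finite $\delta$-net with $\delta$ of order $\eps/2$, and the reformulation $S_X\subset\{x\colon|x^\ast(x-x_i)|\leq r_i\}$ iff $1+|x^\ast(x_i)|\leq r_i$, which the paper uses implicitly when exhibiting a point outside each slab. Your explicit normalization $\phi=x^\ast/\|x^\ast\|$ and the Riesz-lemma treatment of $x^\ast=0$ handle a detail the paper leaves tacit (its proof of (ii)$\Rightarrow$(i) takes $x^\ast\in S_{X^\ast}$ directly), but this is bookkeeping, not a different argument.
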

\begin{proof}
(i) $\Rightarrow$ (ii). Assume that the norm on $X$ is weakly octahedral, and $S_X\subset\bigcup_{i=1}^n  B(x_i,r_i)$ for some $x_1,\dotsc,x_n\in X$ and $r_1,\dotsc,r_n>0$. Let $x^*\in S_{X^*}$. We have to show that for some $i\in\{1,\dotsc,n\}$, one has
\[S_X\subset\{x\in X\colon |x^*(x-x_i)|\leq r_i\}.\]
Suppose that, on the contrary, for every $i\in\{1,\dotsc,n\}$ there is an $x\in S_X$ such that $|x^*(x-x_i)|>r_i$. Pick $\varepsilon>0$ satisfying \[r_i<(1-\varepsilon)(1+|x^*(x_i)|)\quad\text{for all $i\in\{1,\dotsc,n\}$.}\] Since the norm on $X$ is weakly octahedral, there is a $y\in S_X$ such that \[(1-\varepsilon)(1+|x^*(x_i)|)\leq\|x_i-y\|\quad\text{for all $i\in\{1,\dotsc,n\}$.}\] This yields  $r_i<\|x_i-y\|$ for every $i\in\{1,\dotsc,n\}$, which is a contradiction.

(ii) $\Rightarrow$ (i). Assume that (ii) holds. Let $E$ be a finite-dimensional subspace of $X$. Let $x^\ast\in S_{X^\ast}$ and let $0<\varepsilon\leq1$. Suppose that, for every $y\in S_X$ there is an $x\in E$ such that
\begin{equation}\label{eq: woh}
\|x-y\|<(1-\varepsilon)(|x^\ast(x)|+1).
\end{equation}
Then $\|x\|<\frac{2-\varepsilon}{\varepsilon}$. Denote by $\delta=\varepsilon/2$. Consider now a finite $\delta$-net $\{x_1,\dots,x_n\}$ for $\frac{2-\varepsilon}{\varepsilon}B_E$. If $y\in S_X$, then find a corresponding $x\in E$ such that (\ref{eq: woh}) holds, and choose $x_i$ such that $\|x-x_i\|<\delta$. By (\ref{eq: woh}), we have
\begin{align*}
\|x_i-y\|&\leq\|x-y\|+\delta\\
&<(1-\varepsilon)(|x^\ast(x)|+1)+\delta\\
%&\leq(1-\varepsilon)(|x^\ast(x_i)|+|x^\ast(x-x_i)|+1)+\delta\\
&\leq(1-\varepsilon)(|x^\ast(x_i)|+\delta+1)+\delta\\
&=(1-\varepsilon)|x^\ast(x_i)|+1-\varepsilon\delta\\
&\leq(1-\varepsilon^2/2)(|x^\ast(x_i)|+1).
\end{align*}
Thus $S_X\subset \bigcup_{i=1}^n  B(x_i,r_i)$, where $r_i=(1-\varepsilon^2/2)(|x^\ast(x_i)|+1)$.
On the other hand,
\[S_X\not\subset\{x\in X\colon |x^\ast(x-x_i)|\leq r_i\}\quad\text{for all $i\in\{1,\dotsc,n\}$}.\]
This contradicts (ii).
\end{proof}

We recall a simple criterion for the weakly octahedral norm.
\begin{lem}[{\cite[Proposition 2.2]{HLP}}]\label{lem: woh}
Let $X$ be a Banach space. The following assertions are equivalent:
\begin{enumerate}[\upshape (i)]
\item the norm on $X$ is weakly octahedral;
\item for every $n\in\mathbb N$, $x_1,\dotsc,x_n\in S_X$, $x^\ast\in B_{X^\ast}$, and $\eps>0$, there is a $y\in S_X$ such that
\[
\|x_i+ty\|\geq(1-\eps)(|x^\ast(x_i)|+t)\quad\text{for all $i\in\{1,\dotsc,n\}$ and $t>0$}.
\]
\end{enumerate}
\end{lem}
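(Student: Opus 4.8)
The plan is to prove the two implications separately. The implication (i) $\Rightarrow$ (ii) is a direct rescaling and should be painless, while (ii) $\Rightarrow$ (i) needs a finite--net (compactness) argument carried out on a suitable finite--dimensional sphere.

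For (i) $\Rightarrow$ (ii), given $x_1,\dots,x_n\in S_X$, $x^\ast\in B_{X^\ast}$ and $\eps>0$, I would set $E=\mathrm{span}\{x_1,\dots,x_n\}$ and apply weak octahedrality to $E$, $x^\ast$ and $\eps$ to obtain a \emph{single} $y\in S_X$ with $\|x-y\|\geq(1-\eps)(|x^\ast(x)|+1)$ for all $x\in E$ (recall $\|y\|=1$). For a fixed $i$ and $t>0$ I would then substitute the vector $x=-x_i/t\in E$ and multiply through by $t$; since $\|-x_i-ty\|=\|x_i+ty\|$ and $|x^\ast(-x_i/t)|=|x^\ast(x_i)|/t$, this gives exactly $\|x_i+ty\|\geq(1-\eps)(|x^\ast(x_i)|+t)$, so the same $y$ witnesses (ii).

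For (ii) $\Rightarrow$ (i), fix a finite--dimensional $E$, a functional $x^\ast\in B_{X^\ast}$ and (without loss of generality) $0<\eps<1$; the goal is one $y\in S_X$ that beats the target inequality simultaneously for every $x\in E$. The decisive preliminary remark is that the inequality is automatic once $\|x\|$ is large: from $\|x-y\|\geq\|x\|-1$ and $(1-\eps)(|x^\ast(x)|+1)\leq(1-\eps)(\|x\|+1)$ one checks that the bound holds for \emph{every} $y\in S_X$ as soon as $\|x\|\geq R:=(2-\eps)/\eps$. This confines the genuine difficulty to the compact set $R\,B_E$. I would then pick a small $\delta>0$, take a finite $\delta$--net $\{u_1,\dots,u_n\}$ of $S_E$, and feed these points into (ii) with the same $x^\ast$ but with $\eps/2$ in place of $\eps$, producing one $y\in S_X$. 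Taking $-y$ as the candidate and writing any $x$ with $0<\|x\|<R$ as $x=u/s$, $u\in S_E$, $s=1/\|x\|>1/R$, I would approximate $u$ by a net point $u_j$, insert the estimate from (ii) for $\|u_j+sy\|$, and control the two approximation errors coming from $\|u-u_j\|<\delta$ and $|x^\ast(u-u_j)|<\delta$. After dividing by $s$ (and using $1/s=\|x\|<R$) this yields $\|x+y\|>(1-\eps/2)(|x^\ast(x)|+1)-2\delta R$, whereupon the choice $\delta\leq\eps/(4R)$ absorbs the error into the slack between $1-\eps/2$ and $1-\eps$.

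The main obstacle---and the very reason for the auxiliary scalar $t$ in formulation (ii)---is the tension between the infinitely many $x\in E$ that must be controlled and the finitely many vectors that (ii) supplies. The boundedness remark $\|x\|<R$ is exactly what makes a finite--net argument feasible: without it the error term $\delta\|x\|$ would be unbounded. I expect the only delicate bookkeeping to be the coordinated choice of $\eps/2$ and $\delta$ so that the net error and the homogeneity factor $1/s$ cancel as intended, together with keeping the signs straight (the witness for (i) turns out to be $-y$, which is what matches the $+ty$ appearing in (ii)).
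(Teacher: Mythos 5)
Your proposal is correct, but note that the paper itself gives no proof of this lemma: it is quoted verbatim from \cite[Proposition~2.2]{HLP}, so there is no in-paper argument to compare against. Both of your implications check out. In (i)\,$\Rightarrow$\,(ii), applying weak octahedrality to $E=\mathrm{span}\{x_1,\dotsc,x_n\}$ and substituting $x=-x_i/t$ is exactly the homogeneity argument one expects, and the sign and scaling bookkeeping ($\|-x_i-ty\|=\|x_i+ty\|$, multiply by $t$) is right. In (ii)\,$\Rightarrow$\,(i), your reduction to $\|x\|<R=(2-\eps)/\eps$ via $\|x-y\|\geq\|x\|-1$ is sound (at $\|x\|=R$ the two sides of $\|x\|-1\geq(1-\eps)(\|x\|+1)$ coincide), and the net argument closes: from $\|u+sy\|\geq(1-\eps/2)(|x^\ast(u_j)|+s)-\delta\geq(1-\eps/2)(|x^\ast(u)|+s)-2\delta$, dividing by $s=1/\|x\|$ gives $\|x+y\|\geq(1-\eps/2)(|x^\ast(x)|+1)-2\delta\|x\|$, and $\delta\leq\eps/(4R)$ indeed absorbs the error since $|x^\ast(x)|+1\geq 1$; the witness $-y$ (or $y$ itself, since $E$ is a subspace and $|x^\ast(-x)|=|x^\ast(x)|$, the sign is actually immaterial) then works for all $x\in E$, the case $x=0$ being trivial. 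It is worth remarking that your technique is precisely the one the authors deploy in the (ii)\,$\Rightarrow$\,(i) direction of Proposition~\ref{prop: woh balls}, where the same bound $\|x\|<\frac{2-\eps}{\eps}$ and a finite $\delta$-net with $\delta=\eps/2$ appear; the only structural difference is that you place the net on the sphere $S_E$ and exploit the full ``for all $t>0$'' quantifier of (ii) to scale, whereas the paper nets the ball $\frac{2-\eps}{\eps}B_E$ directly. Two cosmetic blemishes, neither affecting correctness: you reuse $n$ both for the data in (ii) and for the net size, and you should say a word about $x=0$ rather than restrict silently to $0<\|x\|<R$.
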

\begin{rem}
It is not clear whether it suffices to take $t=1$ in the preceding condition, i.e. whether condition (ii) is equivalent to the following \begin{enumerate}
\item[(iii)]for every $n\in\mathbb N$, $x_1,\dotsc,x_n\in S_X$, $x^\ast\in B_{X^\ast}$, and $\eps>0$, there is a $y\in S_X$ such that
\[
\|x_i+y\|\geq(1-\eps)(|x^\ast(x_i)|+1)\quad\text{for all $i\in\{1,\dotsc,n\}$}.
\]
\end{enumerate}
However, similarly to the proof of Proposition~\ref{prop: woh balls} we observe that condition (iii) is equivalent to the following
\begin{enumerate}
\item[(iv)] if $x_1,\dotsc,x_n\in S_X$ and $r_1,\dotsc,r_n>0$ are such that
\[
S_X\subset \bigcup_{i=1}^n B(x_i,r_i)
\]
then for every $x^\ast\in S_{X^\ast}$ one has
$S_X\subset\{x\in X\colon |x^*(x-x_i)|\leq r_i\}$ for some $i$ in $\{1,\dotsc,n\}$.
\end{enumerate}
\end{rem}

%\bibliographystyle{amsplain}
%\footnotesize
%\bibliography{Bibliography}

\end{document}